\author{Jean-Paul Allouche \\
CNRS, Institut de Math\'ematiques \\
Universit\'e Pierre et Marie Curie \\
4 place Jussieu, F-75752 Paris Cedex 05 \\
France \\
{\tt allouche@math.jussieu.fr} \\
\and
Jeffrey Shallit\\
School of Computer Science\\
University of Waterloo\\
Waterloo, Ontario N2L 3G1\\
Canada\\
{\tt shallit@cs.uwaterloo.ca} \\
}
\title{A variant of Hofstadter's sequence and finite automata}
\date{ }
\def \endpf{{\ \ $\Box$ \medbreak}}
\theoremstyle{plain}
\newtheorem{theorem}{Theorem}
\newtheorem{corollary}[theorem]{Corollary}
\newtheorem{lemma}[theorem]{Lemma}
\theoremstyle{definition}
\newtheorem{definition}[theorem]{Definition}
\theoremstyle{remark}
\newtheorem{remark}[theorem]{Remark}
\begin{document}

\maketitle

\vskip .2in

\begin{center}
{\it In memory of Alf van der Poorten:  colleague, connoisseur, raconteur,
friend}
\end{center}

\vskip .2in

\begin{abstract}
Following up on a paper of Balamohan, Kuznetsov, and Tanny,
we analyze a variant of Hofstadter's $Q$-sequence and show it is
$2$-automatic.  An automaton computing the sequence is explicitly given.
\end{abstract}

\section{Introduction}

In his 1979 book {\it G\"odel, Escher, Bach} \cite{Hof}, Douglas
Hofstadter introduced the sequence $Q(n)$ defined by the recursion $$
Q(n) = Q(n-Q(n-1)) + Q(n-Q(n-2))$$ for $n \geq 2$ and $Q(1) = Q(2) =
1$.  Although it has been studied extensively (e.g., \cite{Pinn:1999}),
still little is known about its behavior, and it is not mentioned
in standard books about recurrences (e.g., \cite{EPSW}).  It is
sequence A005185 in Sloane's {\it Encyclopedia} \cite{Sloane}.

Twenty years later, Hofstadter and Huber introduced a family of
sequences analogous to the $Q$-sequence, and defined by the recursion
$$ Q_{r,s}(n) = Q_{r,s} (n-Q_{r,s} (n-r)) + Q_{r,s} (n-Q_{r,s} (n-s))
$$ for $n > s > r$ \cite{HH}.  The case $r = 1$, $s = 4$ is of
particular interest.

Recently Balamohan, Kuznetsov and Tanny \cite{BKT} gave a nearly complete
analysis of the sequence $Q_{1,4}$ (called $V$ in their paper).
It is defined by
$$
V(1) = V(2) = V(3) = V(4) = 1, \ \mbox{\rm and \ }
\forall n > 4, \ V(n) := V(n-V(n-1)) + V(n-V(n-4)).
$$
Here is a short table of the sequence $V$ 
(sequence A063882 in Sloane's {\it Encyclopedia} \cite{Sloane}).

\begin{table}[H]
\begin{center}
\begin{tabular}{|c|c|c|c|c|c|c|c|c|c|c|c|c|c|c|c|c|c|c|c|c|}
\hline
$n$ & 1 & 2 & 3 & 4 & 5 & 6 & 7 & 8 & 9 & 10 & 11 & 12 & 13 & 14 & 15 &
	16 & 17 & 18 & 19 & 20 \\
\hline
$V(n)$ & 1 & 1 & 1 & 1 & 2 & 3 & 4 & 5 & 5 & 6 & 6 & 7 & 8 & 8 & 9 & 9 & 
	10 & 11 &  11 &  11 \\
\hline
\end{tabular}
\end{center}
\end{table}

Among the results of Balamohan, Kuznetsov, and Tanny
is a precise description of
the ``frequency'' sequence $F(n)$ defined by
$$
F(a) := \# \{n, \ V(n) = a\}.
$$

Here is a short table of the sequence $F$ (sequence A132157 in Sloane's 
{\it Encyclopedia} \cite{Sloane}).
\begin{table}[H]
\begin{center}
\begin{tabular}{|c|c|c|c|c|c|c|c|c|c|c|c|c|c|c|c|c|c|c|c|c|}
\hline
$n$ & 1 & 2 & 3 & 4 & 5 & 6 & 7 & 8 & 9 & 10 & 11 & 12 & 13 & 14 & 15 &
	16 & 17 & 18 & 19 & 20 \\
\hline
$F(n)$ & 4 & 1 & 1 & 1 & 2 & 2 & 1 &  2 &  2 &  1 &  3 &  2 & 
	1 &  2 &  2 &  1 &  3 &  2 &  1 &  2 \\
\hline
\end{tabular}
\end{center}
\end{table}

In particular they proved the following theorem
\cite[Lemmas 13--19 and Table 5]{BKT}.

\begin{theorem}[Balamohan, Kuznetsov, Tanny]
There exist two (explicit) maps $g, h$, with
$g,h : \{1, 2, 3\}^4 \to \{1, 2, 3\}$,
such that, for all $a > 3$
$$
\begin{array}{cll}
F(2a) &=& g(F(a-2), F(a-1), F(a), F(a+1)) \\
F(2a+1) &=& h(F(a-2), F(a-1), F(a), F(a+1)).
\end{array}
$$
\end{theorem}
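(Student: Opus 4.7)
The plan is to prove the doubling formulas by strong induction on $a$, building on structural properties of $V$ that must be established in parallel. The first preliminary step is to show that $V$ is nondecreasing, so that for each $a \geq 1$ the preimage $V^{-1}(a)$ is an interval of length $F(a)$, starting at some position $L(a) = 1 + \sum_{b<a} F(b)$. A parallel induction shows that $F(a) \in \{1,2,3\}$ for every $a > 1$; this is needed merely for the codomain of $g$ and $h$ to make sense.

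Next I would prove the key structural lemma: for $n$ with $V(n) \in \{2a, 2a+1\}$, the ``lookback'' indices $n - V(n-1)$ and $n - V(n-4)$ both fall into a short window of indices around $L(a)$ whose $V$-values all lie in $\{a-2, a-1, a, a+1\}$. Heuristically this encodes the fact that $V(n) \approx n/2$, which forces $n - V(n-1) \approx n/2$, and the block at this height has $V$-value near $a$. With this lemma in hand, the defining recurrence $V(n) = V(n - V(n-1)) + V(n - V(n-4))$ expresses $V(n)$ as a sum of two values drawn from the neighbouring levels, and as $n$ ranges over the block mapping to $2a$ or $2a+1$ the lookback indices sweep monotonically across the blocks at levels $a-2, a-1, a, a+1$.

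The length $F(2a)$ (respectively $F(2a+1)$) is then counted by the number of $n$ in this sweep for which the two lookback $V$-values sum to $2a$ (respectively $2a+1$). Since each neighbouring block has length exactly $F(a+j)$ for $j \in \{-2,-1,0,1\}$, the positions at which the lookback pair crosses from one level to the next, and therefore the count itself, depend only on the quadruple $(F(a-2), F(a-1), F(a), F(a+1))$. Reading off the count for each value in $\{1,2,3\}^4$ gives the tables defining $g$ and $h$.

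The main obstacle is the structural lemma controlling the positions of $n - V(n-1)$ and $n - V(n-4)$: one must show that these lookback indices behave as claimed \emph{for every} $n$ in the relevant block, with no boundary drift, and this requires a careful simultaneous induction tying together the nondecreasing property, the $\{1,2,3\}$ bound, and the precise window location. Combined with the routine but lengthy bookkeeping needed to enumerate the reachable quadruples in $\{1,2,3\}^4$ (not every quadruple actually occurs) and to tabulate $g, h$ on them, this constitutes the bulk of the work referenced in Lemmas 13--19 and Table 5 of \cite{BKT}.
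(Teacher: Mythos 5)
First, note that the paper does not prove this theorem at all: it is quoted wholesale from Balamohan, Kuznetsov and Tanny \cite{BKT} (their Lemmas 13--19 and Table 5), with only a remark correcting the quantifiers in their Lemma 13. So your proposal can only be measured against the argument in \cite{BKT}, and it does capture the overall shape of that argument: monotonicity of $V$ with increments in $\{0,1\}$, so that each level set $V^{-1}(a)$ is an interval of length $F(a)$; the bound $F(a)\in\{1,2,3\}$ for $a>1$; and a case analysis of how the recurrence distributes the values $2a$ and $2a+1$ over the positions lying above level $a$.

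As a proof, however, your text has a genuine gap, and you have located it yourself: everything hinges on the ``structural lemma'' that for $n$ in the block at level $2a$ or $2a+1$ the lookback indices $n-V(n-1)$ and $n-V(n-4)$ land in a controlled window near $L(a)$, and for this you offer only the heuristic $V(n)\approx n/2$. Moreover, even granting that lemma, the step asserting that the count depends only on the quadruple $(F(a-2),F(a-1),F(a),F(a+1))$ is not justified as written: the number of $n$ for which the two lookback values sum to $2a$ rather than $2a+1$ depends not only on the \emph{lengths} of the neighbouring blocks but on where within those blocks the two sweeps start and on their relative phase. One needs an additional inductive invariant pinning down these offsets in terms of the same quadruple --- this is precisely what the chain of Lemmas 13--19 in \cite{BKT} establishes before Table 5 can be read off. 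Without that invariant the maps $g$ and $h$ are not yet well defined, so what you have is a plausible outline of the \cite{BKT} strategy rather than a proof of the theorem.
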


\noindent
(We note that in Lemma 13 of \cite{BKT}, the quantifiers
$a \geq 3$ for the equality $F(2a) = 2$ and $a \geq 4$ for the
equality $F(2a+1) = 2$ should have been mentioned.)

\bigskip

In this paper we prove that the sequence 
$(F(n))_{n \geq 1}$ is $2$-automatic, which means essentially
that $F(n)$ can be computed ``in a simple way'' from the
base-$2$ representation of $n$ --- in particular, it can be computed 
in $O(\log n)$ time.
Furthermore, we give the automaton explicitly.
For definitions and properties
of automatic sequences, the reader is referred to \cite{AS}.
For some recent related papers, see \cite{IRT,DRT,Rahman}.

\section{The main result}

We begin this section with a general result on automatic sequences.
Before stating the theorem we need a notation.

\begin{definition}
Let $W = (W(n))_{n \geq 0}$ be a sequence. Let $\alpha$ be an integer
in ${\mathbb Z}$. We let $W^{\alpha}$ denote the sequence defined, for
$n \geq - \alpha$, by
$$
W^{\alpha}(n) := W(n + \alpha).
$$
\end{definition}

\begin{definition}
Let $c$ be an integer $\geq 0$.
If a sequence $(W(n))_{n \geq 0}$ is only defined
for $n \geq c$, we assume that the values of $W(n)$ for $n \in [0, c)$
are arbitrary.
\end{definition}

\begin{theorem}\label{main}
Let $(U(n))_{n \geq 0}$ be a sequence with values in a finite set 
${\cal A}$. Let $q \geq 2$ be an integer. Suppose that there exist 
four nonnegative integers $t, a, b, n_0$, and $q^{t+1}$ functions from the 
set ${\cal A}^{a+b+\frac{q^{t+1}-1}{q-1}}$ to ${\cal A}$, denoted
$f_0, f_1, \ldots, f_{q^{t+1}-1}$, such that 
$\forall j \in [0, q^{t+1}-1]$ and $\forall n \geq n_0$ 
$$
\begin{array}{ll}
U(q^{t+1}n+j) = \\
\ \ \ \
f_j(U^{-a}(n), \ldots, U^{-1}(n), U^0(n), U^{1}(n), \ldots, U^b(n),
U_2(n), U_3(n), \ldots, U_{\frac{q^{t+1}-1}{q-1}}(n))
\end{array}
$$
where $U_1 = U, U_2, \ldots, U_{\frac{q^{t+1}-1}{q-1}}$ are 
the subsequences $(U(q^i n + j))_{n \geq 0}$ with $i \in [0,t]$ 
and $j \in [0, q^i-1]$, written in some fixed order.
Then the sequence $(U(n))_{n \geq 0}$ is $q$-automatic.
\end{theorem}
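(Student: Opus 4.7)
The plan is to apply the standard criterion (see~\cite{AS}) that a sequence over a finite alphabet is $q$-automatic if and only if its $q$-kernel is finite, or equivalently, if and only if it is a coding (letter-to-letter projection) of a $q$-automatic sequence over some other finite alphabet. Rather than analyze the $q$-kernel of $U$ directly---which is awkward because the recursion mixes shifts $U^{\alpha}$ with the kernel subsequences $U_i$---the idea is to enrich $U$ to a vector-valued sequence $\hat{U}$ whose decimations are controlled by a single fixed map per digit.

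Set $m := (q^{t+1}-1)/(q-1)$ and fix a parameter $M$ to be chosen below. Consider the finite family of sequences
\[
\mathcal{F}_0 := \{\,U_i^{\alpha} : 1 \leq i \leq m,\ |\alpha| \leq M\,\},
\]
and define $\hat{U}(n) := (V(n))_{V \in \mathcal{F}_0}$, a sequence with values in the finite alphabet $\mathcal{B} := \mathcal{A}^{\mathcal{F}_0}$. The key step is to exhibit, for each digit $r \in \{0,\ldots,q-1\}$, a function $\phi_r : \mathcal{B} \to \mathcal{B}$ with $\hat{U}(qn+r) = \phi_r(\hat{U}(n))$ for all sufficiently large $n$. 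Once this is done, the standard DFAO construction (state set $\mathcal{B}$, transitions $\phi_r$, coordinate-projection output, plus a finite transient absorbing the boundary cases where the recursion is not yet valid) shows $\hat{U}$ is $q$-automatic, whence $U = U_1^{0}$, being the letter-to-letter projection of $\hat{U}$ onto one coordinate, is also $q$-automatic.

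To construct $\phi_r$, pick any $V = U_i^{\alpha} \in \mathcal{F}_0$ with $U_i = (U(q^{\ell} m + r_i))_m$, $\ell \leq t$. A direct rewriting gives
\[
V(qn+r) = U\bigl(q^{\ell+1}n + q^{\ell}(r+\alpha) + r_i\bigr) = U\bigl(q^{\ell+1}(n+\beta) + j'\bigr),
\]
where $j' \in [0, q^{\ell+1})$ and $\beta \in \mathbb{Z}$ are determined by Euclidean division: $q^{\ell}(r+\alpha) + r_i = q^{\ell+1}\beta + j'$. If $\ell + 1 \leq t$, this equals $U_{i'}^{\beta}(n)$ for the $U_{i'} \in K_t := \{U_1, \ldots, U_m\}$ of level $\ell+1$ and offset $j'$, which is a coordinate of $\hat{U}(n)$ provided $|\beta| \leq M$. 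If $\ell + 1 = t+1$, the hypothesis applied at $n + \beta$ in place of $n$ (valid for $n + \beta \geq n_0$) rewrites
\[
U(q^{t+1}(n+\beta) + j') = f_{j'}\bigl(U_1^{\beta-a}(n), \ldots, U_1^{\beta+b}(n), U_2^{\beta}(n), \ldots, U_m^{\beta}(n)\bigr),
\]
again a function of coordinates of $\hat{U}(n)$, provided $\beta - a, \ldots, \beta + b$ all lie in $[-M, M]$. A routine estimate shows $|\beta| \leq M/q + 1$, so the choice $M \geq q(1 + \max(a,b))/(q-1)$ makes every relevant shift fit inside $[-M, M]$.

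The principal obstacle is the bookkeeping in the previous paragraph: verifying that the integer $\beta$ produced by reduction modulo $q^{\ell+1}$ is small enough that, even after the further displacement by $-a, \ldots, b$ triggered when $\ell+1 = t+1$, the arguments of $\phi_r$ remain indexed by elements of $\mathcal{F}_0$. The identity $\hat{U}(qn+r) = \phi_r(\hat{U}(n))$ may fail on an initial segment of indices, but only for finitely many $n$, and the corresponding finitely many values of $\hat{U}$ can be hard-coded as additional initial transient states of the DFAO without affecting $q$-automaticity.
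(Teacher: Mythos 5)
Your proof is correct and follows essentially the same route as the paper's: both arguments work with the finite family of shifted kernel sequences $U_\ell^k$ with bounded shift, use Euclidean division to rewrite $q^\ell(r+\alpha)+r_i$ modulo $q^{\ell+1}$, split into the cases $\ell+1\leq t$ and $\ell+1=t+1$ (invoking the hypothesis in the latter), and verify the same bound on the new shift to show closure under $q$-decimation. The only difference is packaging: you bundle the family into a single vector-valued sequence $\hat U$ with explicit transition maps $\phi_r$, whereas the paper keeps the sequences separate and closes the set under pointwise composition with the $f_j$; your choice of a symmetric window $[-M,M]$ versus the paper's $[-A,B]$ is likewise immaterial.
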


Before proving this theorem we recall the Euclidean division of
an integer in ${\mathbb Z}$ by a positive integer.

\begin{lemma}
Let $S$ be an integer in ${\mathbb Z}$ and $Q$ be a positive integer.
Then there exist $X \in {\mathbb Z}$ and an integer
$Y$, $0 \leq Y < Q$, such that 
$ S = QX+Y  $.
\end{lemma}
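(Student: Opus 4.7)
The plan is to invoke the well-ordering principle on a carefully chosen subset of $\mathbb{N}$. I would introduce
\[
T := \{\, S - QX : X \in \mathbb{Z},\ S - QX \geq 0 \,\}
\]
and first verify that $T \neq \emptyset$: since $Q \geq 1$, the choice $X = -|S|$ gives $S - QX = S + Q|S| \geq 0$ regardless of the sign of $S$, so $T$ contains at least one nonnegative integer.

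By the well-ordering principle, $T$ has a least element $Y \geq 0$, realized as $Y = S - QX$ for some $X \in \mathbb{Z}$; this already supplies the decomposition $S = QX + Y$ with $Y \geq 0$. To finish, I would show $Y < Q$ by contradiction: if $Y \geq Q$ then $Y - Q = S - Q(X+1)$ is still nonnegative and hence lies in $T$, yet is strictly smaller than $Y$, violating minimality.

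There is essentially no obstacle here, as the statement is the usual Euclidean division lemma; the only subtle point is uniformity in the sign of $S$, and that is absorbed into the choice $X = -|S|$ in the nonemptiness step. A slightly longer alternative would be induction on $|S|$, handling $S \geq 0$ by a base case $S < Q$ and a reduction $S \mapsto S - Q$, then passing to $S < 0$ by applying the result to $-S$ and renaming; the well-ordering argument above avoids this case split and yields both existence and the bound $Y < Q$ in a single stroke.
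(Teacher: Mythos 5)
Your proof is correct, but it takes a different route from the paper's. The paper simply sets $X = \lfloor S/Q \rfloor$ and $Y = S - Q\lfloor S/Q \rfloor$, then multiplies the defining inequality $0 \leq S/Q - \lfloloor S/Q \rfloor < 1$ of the floor function by $Q$ to get $0 \leq Y < Q$ in one line. (Correction: $0 \leq S/Q - \lfloor S/Q \rfloor < 1$.) You instead apply the well-ordering principle to the set $T$ of nonnegative values $S - QX$, checking nonemptiness via $X = -|S|$ and establishing $Y < Q$ by the usual minimality contradiction. Both arguments are sound. The paper's version is shorter and delegates all the work to the already-known properties of the floor function on rationals; yours is self-contained and more elementary in that it needs only the well-ordering of $\mathbb{N}$, at the cost of a nonemptiness check and a contradiction step. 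Your observation that the choice $X = -|S|$ handles both signs of $S$ uniformly is exactly the right point to flag, since the lemma is stated for arbitrary $S \in \mathbb{Z}$ precisely because the paper later needs division of possibly negative quantities.
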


\begin{proof}
Let $X = \lfloor S/Q \rfloor$ and $Y = S - Q \lfloor S/Q \rfloor$.
Then $0 \leq S/Q - \lfloor S/Q \rfloor < 1$, so,
multiplying by Q, we get
$0 \leq Y < Q$. 
\endpf
\end{proof}

\bigskip

\noindent
{\it Proof of Theorem~\ref{main}}.
To prove that the sequence $U = (U(n))_{n \geq 0}$ is $q$-automatic, it
suffices to find a finite set of sequences ${\cal E}$ that contains $U$, 
such that if $V = (V(n))_{n \geq 0}$ belongs to ${\cal E}$, then, for
any $r \in [0, q-1]$ the sequence $(V(qn+r))_{n \geq 0}$ also belongs to
${\cal E}$. 
Fix two positive integers $A$ and $B$ such that
$A \geq \max(n_0, \frac{q(a+1)}{q-1})$ and $B \geq \frac{q(b+1)}{q-1}$.
Recall that $U_1=U$, $U_2, \ldots, U_{\frac{q^{t+1}-1}{q-1}}$ are the 
sequences $(U(q^i n + j))_{n \geq 0}$ with $i \in [0,t]$ and 
$j \in [0, q^i-1]$. Also recall that the sequence $U_k^{\alpha}$ is
defined by $U_k^{\alpha}(n) := U_k(n+\alpha)$. Let ${\cal E}$ be the 
(finite) set of sequences defined by
$$
V \in {\cal E} \Longleftrightarrow \exists \ell \in [1, \frac{q^{t+1}-1}{q-1}], 
\ \exists k \in [-A,B], \ \forall n \geq A, \ V(n) = U_{\ell}^k(n).
$$
Now let $V$ be a sequence in ${\cal E}$. Take $r \in [0, q-1]$. 
There exist $\ell \in [1, \frac{q^{t+1}-1}{q-1}]$ and $k \in [-A,B]$ 
such that for all $n \geq A$, we have
$$
V(qn+r) = U_{\ell}^k(qn+r) = U_{\ell}(qn+r+k).
$$
Hence for some $i \leq t$ and $j \in [0, q^i-1]$
$$
V(qn+r) = U(q^i(qn+r+k)+j).
$$
Write $q^i(r+k)+j = q^{i+1}x + y$, with $x \in {\mathbb Z}$ and 
$y \in [0, q^{i+1}-1]$, so that 
$$
V(qn+r) = U(q^{i+1}(n+x) + y).
$$
Note that 
$$
q^{i+1}x \leq q^{i+1}x + y = q^i(r+k)+j < q^i(r+k+1)
$$
and
$$
q^{i+1}x = q^i(r+k)+j - y > q^i(r+k) - q^{i+1}
$$
Hence
$$
\frac{r+k-q}{q} < x < \frac{r+k+1}{q}\cdot
$$
We distinguish two cases.

\bigskip

\noindent{\it Case 1:}   $i < t$. Then $i+1 \leq t$. Thus there exists 
$\ell' \in [1, \frac{q^{t+1}-1}{q-1}]$ such that, for $n \geq A$,
$$
V(qn+r) = U(q^{i+1}(n+x) + y) = U_{\ell'}(n+x) = U_{ell'}^x(n).
$$
Now $x > \frac{r+k-q}{q} \geq \frac{r-A-q}{q} \geq \frac{-A-q}{q} \geq -A$
(since $A \geq \frac{q(a+1)}{q-1} \geq \frac{q}{q-1}$), and 
$x < \frac{r+k+1}{q} \leq \frac{q+B}{q} \leq B$ 
(since $B \geq \frac{q(b+1)}{q-1} \geq \frac{q}{q-1}$).
This shows that the sequence $(V(qn+r))_{n \geq 0}$ belongs to ${\cal E}$.

\bigskip

\noindent{\it Case 2:}   $i=t$. Then $i+1=t+1$. From the hypothesis and the condition 
$A \geq n_0$, we can write, for $n \geq A$,
$$
\begin{array}{lll}
V(qn+r) = U(q^{t+1}(n+x) + y) = \\
\ \ \ \
f_y(U^{x-a}(n), \ldots, U^{x-1}(n), U^x(n), U^{x+1}(n), \ldots, U^{x+b}(n),
U_2^x(n), U_3^x(n), \ldots, U_{\frac{q^{t+1}-1}{q-1}}^x(n)).
\end{array}
$$
To prove that the sequence $(V(qn+r))_{n \geq 0}$ belongs to ${\cal E}$,
it suffices to prove that all sequences $U^{\beta}$ for $\beta 
\in [x-a, x+b]$ and all sequences $U_{\ell}^x$ for
$\ell \in [1, \frac{q^{t+1}-1}{q-1}]$ belong to ${\cal E}$, and to use 
composition of maps.
But we have
$$
\beta \geq x-a > \frac{r+k-q}{q} - a \geq \frac{-A-q}{q} - a \geq -A
$$
(recall that $A \geq \frac{q(a+1)}{q-1}$) and
$$
\beta \leq x+b < \frac{r+k+1}{q} + b \leq \frac{q+B}{q} + b \leq B 
$$
(recall that $B \geq \frac{q(b+1)}{q-1}$).
This implies that all sequences occurring in the arguments of $f_y$
above belong to ${\cal E}$. \endpf

\begin{remark} 
Theorem~\ref{main} above is similar to (but different from)
\cite[Theorem 6, p.\ 5]{AS2} on $k$-regular sequences. That theorem
implies Theorem~\ref{main} above in the case where the maps $f_j$ are linear.
\end{remark}

\begin{corollary}
The sequence $F = (F(n))_{n \geq 0}$ 
is $2$-automatic.
\end{corollary}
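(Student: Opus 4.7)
The plan is to apply Theorem~\ref{main} directly, using the theorem of Balamohan, Kuznetsov, and Tanny as the source of the required recursion. I would take $q = 2$, $t = 0$, $a = 2$, $b = 1$, and $n_0 = 4$. With these choices, $q^{t+1} = 2$, so exactly two functions $f_0, f_1$ are needed — these will be the BKT maps $g$ and $h$, respectively. Also $\frac{q^{t+1}-1}{q-1} = 1$, so no auxiliary subsequences $U_2, U_3, \ldots$ appear; the only arguments of $f_j$ are the shifts $U^{-2}(n), U^{-1}(n), U^0(n), U^1(n)$, which match exactly the four arguments $F(a-2), F(a-1), F(a), F(a+1)$ appearing in the BKT recurrence. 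Renaming $a$ as $n$, the BKT identities become $F(2n) = f_0(F(n-2), F(n-1), F(n), F(n+1))$ and $F(2n+1) = f_1(F(n-2), F(n-1), F(n), F(n+1))$ for all $n \geq 4 = n_0$, which is precisely the hypothesis of Theorem~\ref{main}.

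Before invoking the theorem, I would address two small bookkeeping matters. First, Theorem~\ref{main} requires a finite alphabet $\mathcal{A}$ and functions $f_j : \mathcal{A}^{a+b+1} \to \mathcal{A}$. From the table of $F$ one sees that $F(1) = 4$ while $F(n) \in \{1, 2, 3\}$ for all $n \geq 2$; taking $\mathcal{A} = \{1, 2, 3, 4\}$, I would extend $g$ and $h$ from $\{1,2,3\}^4$ to $\mathcal{A}^4$ in any way whatsoever. This extension is harmless because the recurrence is only applied for $n \geq 4$, where the accessed indices $n-2, n-1, n, n+1$ are all $\geq 2$, so the values plugged in are always in $\{1,2,3\}$, where $g$ and $h$ are the genuine BKT maps. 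Second, the corollary states $F = (F(n))_{n \geq 0}$ whereas the frequency sequence is naturally indexed from $1$; the value $F(0)$ may be set arbitrarily by Definition~2.

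With these preparations all hypotheses of Theorem~\ref{main} are verified, and the theorem immediately yields that $F$ is $2$-automatic. There is really no obstacle to overcome: Theorem~\ref{main} is engineered precisely for the shape of recurrence produced by BKT, so the corollary is a one-line application, provided one has already imported the BKT characterization.
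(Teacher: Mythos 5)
Your proposal is correct and is exactly the paper's argument: the paper's proof is the one-line remark that the corollary follows from Theorem~\ref{main} together with the Balamohan--Kuznetsov--Tanny recurrence, after setting $F(0)=0$. You simply spell out the parameter choices ($q=2$, $t=0$, $a=2$, $b=1$, $n_0=4$, $f_0=g$, $f_1=h$) and the harmless alphabet extension, which the paper leaves implicit.
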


\begin{proof}
It suffices to use the theorem recalled in the first section, after
having extended the sequence $F$ by $F(0)=0$.  \endpf
\end{proof}

\section{An explicit automaton}

In this section we provide an explicit automaton\footnote{In honor
of Alf van der Poorten, we cannot resist quoting Voltaire:
``Impuissantes machines/
Automates pensants mus par des mains divines."}
to calculate the
sequence $F$.

\bigskip

The automaton is constructed in two stages. First, we give an automaton
$A$ with the property that reading $n$ in base $2$ 
takes us to a state $q$ with the property that
the four values $F(n+a)$ for $-2 \leq a \leq 1$ are completely determined
by $q$.  Next, we show that $A$ can be minimized to give an automaton $B$
computing $F(n)$.  We remark that we assume throughout that the automaton
reads the ordinary base-$2$ representation
of $n$ from ``left to right'', ending at the
least significant digit, although we do allow the possibility of leading
zeros at the start.

Let us start with the description of $A = (Q, \Sigma, \Delta,
\delta, q_0, \tau)$.    The machine $A$ has 33 states with strings as
names; $\Sigma = 
\lbrace 0, 1 \rbrace$; $\Delta = \lbrace 0,1,2,3, 4 \rbrace^4$,
$q_0 = \epsilon$.  The transition function $\delta$ and the output
map $\tau$ are given in Table~\ref{table1} below.

We introduce some notation.
Let $[w]$ denote the integer represented by the binary string
$w$ in base $2$.  Thus, for example, $[00110] = [110] = 6$.
Note that $[\epsilon] = 0$, where
$\epsilon$ denotes the empty string.  If $F$ is our sequence defined above,
then by $F(a..a+i-1)$ we mean the string of length $i$ given by the
values of the function $F$ at $a, a+1, \ldots, a+i-1$.

Our intent is that if $w$ is a binary string, then
$\tau(\delta(q_0, w))$ is the string of length $4$ given by 
$F(n-2..n+1)$, where
$n = [w]$.    (Note that we define $F(0) = F(-1) = F(-2) = 0$.)

To prove that this automaton computes $F(n)$ correctly,
it suffices to show that
\begin{itemize}
\item[(a)] for each state $q$ we have
$\tau(q) = F([q]-2) F([q]-1) F([q]) F([q]+1)$; and

\item[(b)]  if $p = \delta(q,a)$ for two states $p, q \in Q$ and
$a \in \lbrace 0, 1 \rbrace$, then
$F([px]) = F([qax])$ for all strings $x$.
\end{itemize}

Part (a) can be verified by a computation, which we omit.
For example, since $[111001111] = 463$, the claim
$\tau(111001111) = 2133$ means $F(461..464) = 2133$, which can easily
be checked.

Part (b) requires a tedious simultaneous induction on all the assertions,
by induction on $|x|$.
Not surprisingly, we omit most of the details and just prove
a single representative case.

Consider the transition $\delta(100,1) = 110$.
Here we must prove that 
\begin{equation}
F([1001x]) = F([110x])
\label{1001}
\end{equation}
for all strings $x$.  We
do so by induction on $x$.
The base case is $x = \epsilon$, and we have
$F([1001]) = F(9) =2$ and $F([110]) = F(6) = 2$.  

For the induction step, we use the fact that 
\cite[Table 5]{BKT} shows that $F(2a)$ and $F(2a+1)$ is completely
determined by $F(a-2)$, $F(a-1)$, $F(a)$, and $F(a+1)$.  It thus
suffices to check that $F([1001x] + a) = F([110x] + a)$ for 
$-2 \leq a \leq 1$; doing so will then prove (\ref{1001}) for
$x0$ and $x1$, thus completing the induction.

The only cases that require any computation
are when $[x] = 0$ and $a = -1, -2$,
or $[x] = 1$ and $a = -2$, or
$x$ is a number of the form $2^j-1$ for some
$j \geq 1$ and $a = 1$.

\noindent{\it Case 1:}   $x = 0^j$ for some $j \geq 0$.  If $j = 0$ then 
this is the assertion that $F([1001] + a) = F([110] + a)$ for
$-2 \leq a \leq 1$, which is the same as the claim that
$F(7..10) = F(4..7)$.  But $F(7..10) = 1221 = F(4..7)$.

\begin{table}[H]
\caption{The automaton A}
\label{table1}
\begin{center}
\begin{tabular}{|c|c|c|c|}
\hline
$q$ & $\delta(q,0)$ & $\delta(q,1)$ & $\tau(q)$ \\
\hline
$\epsilon$ & $\epsilon$ & 1 & 0004 \\
\hline
1 & 10 & 11 & 0041 \\
\hline
10 & 100 & 101 & 0411 \\
\hline
11 & 110 & 111 & 4111 \\
\hline
100 & 1000 & 110 & 1112 \\
\hline
101 & 1010 & 1011 & 1122 \\
\hline
110 & 1100 & 1101 & 1221 \\
\hline
111 & 1110 & 110 & 2212 \\
\hline
1000 & 1010 & 1011 & 2122 \\
\hline
1010 & 1110 & 10101 & 2213 \\
\hline
1011 & 10110 & 10111 & 2132 \\
\hline
1100 & 1101 & 1110 & 1321 \\
\hline
1101 & 11010 & 11011 & 3212 \\
\hline
1110 & 11100 & 11101 & 2122 \\
\hline
10101 & 101010 & 101011 & 1223 \\
\hline
10110 & 10110 & 10111 & 2232 \\
\hline
10111 & 1101 & 1110 & 2321 \\
\hline
11010 & 101010 & 110101 & 1222 \\
\hline
11011 & 111 & 1000 & 2221 \\
\hline
11100 & 11010 & 111001 & 2213 \\
\hline
10111 & 111010 & 10111 & 2132 \\
\hline
101010 & 1010100 & 11101 & 1322 \\
\hline
101011 & 101010 & 101011 & 3223 \\
\hline
110101 & 1100 & 1101 & 3221 \\
\hline
111001 & 1010 & 1110011 & 2223 \\
\hline
111010 & 110100 & 1110101 & 2232 \\
\hline
1010100 & 11010 & 111001 & 3213 \\
\hline
1110011 & 10110 & 11100111 & 2133 \\
\hline
1110100 & 11101000 & 10111 & 1332 \\
\hline
1110101 & 1101 & 1110 & 3321 \\
\hline
11100111 & 1010100 & 111001111& 2323 \\
\hline
11101000 & 110100 & 1110101&  3232 \\
\hline
111001111 & 111010 & 11100111 & 2133 \\
\hline
\end{tabular}
\end{center}
\end{table}

Otherwise $j \geq 1$.  Then $[1001x] - 1 = [1001 0^j] - 1 =
[1000 1^j]$  and $ [110x] - 1 = [1100^j] - 1 = [1011^j]$.
Now by induction we have
$F([10001^j]) = F([10001 1^{j-1}]) = F([1011 1^{j-1}]) =
F([1011^j])$, as desired.

Similarly, $[1001x] - 2 = [1001 0^j ] - 2 = [1001^{j-1} 0 ]$.
Also $[110x] - 2 = [1100^j] - 2 = [101^{j} 0]$.
Then by induction we have $F([1001^{j-1}0]) = 
F([1 0 0 1 1^{j-2} 0]) = F([1011 1^{j-2} 0]) = F([101^j 0])$, as
desired.

\noindent{\it Case 2:}  $x = 0^j 1$ for some $j \geq 0$.  Then
$[1001x] - 2 = [10010^j 1] - 2 = [10001^{j+1}]$.
Also $[110x] - 2 = [110 0^j 1] - 2 = [101^{j+2}]$.  
By induction we have $F([10001^{j+1}]) = F([10001 1^j]) =
F([1011 1^j]) = F([101^{j+2}]$, as desired.

\noindent{\it Case 3:}  $x = 1^j$ for some $j \geq 1$.  Then
$[1001x] + 1 = [1010^{j+1}]$.  Similarly
$[110x] + 1 = [110 1^j] + 1 = [1110^j]$.
By induction we have $F([1010^{j+1}]) = F([10100 0^{j-1}])
= F([1110 0^{j-1}]) = F([1110^j])$, as desired.

This completes the proof of correctness of a single transition.  

Ultimately, we are not really interested in computing $\tau(q)$, but
only the image of $\tau(q)$ formed by extracting the third component,
which is the one corresponding to $F(n)$.  This means that we can
replace $\tau$ by $\tau'$, which is the projection of $\tau$ along
the third component.  In doing so some of the states of $A$ become
equivalent to other states.  We can now use the standard minimization
algorithm for automata to produce the 20-state minimal automaton
$B = (Q', \Sigma, \Delta, \delta', q_0, \tau')$  computing
$F(n)$.  Table~\ref{table2} below gives the names of the states of
$Q$, and $\delta'$ and $\tau'$ for these states.

\section{Concluding remarks}

It would be interesting to know whether the first difference sequence of 
the variant of Hofstadter's, i.e., the sequence $(V(n+1)-V(n))_{n \geq 0}$,
is also $2$-automatic. We already know that it takes only finitely many
values \cite[Theorem~1, page~5]{BKT}. Of course it might well be the case
that this sequence is {\em not\,} automatic: in a very different context,
think of the classical Thue-Morse sequence which is $2$-automatic, but whose
runlength sequence is not \cite{AAS}. It would be also interesting to
determine for which sequences $Q_{r,s}$ (with the notation in the introduction)
the frequency sequence is automatic.

\begin{table}[H]
\caption{The automaton B}
\label{table2}
\begin{center}
\begin{tabular}{|c|c|c|c|}
\hline
$q$ & $\delta'(q,0)$ & $\delta'(q,1)$ & $\tau'(q)$ \\
\hline
$\epsilon$ & $\epsilon$ & 1 & 0 \\
\hline
1 & 10 & 11 & 4 \\
\hline
10 & 100 & 101 & 1 \\
\hline
11 & 110 & 111 & 1 \\
\hline
100 & 101 & 110 & 1 \\
\hline
101 & 1010 & 1011 & 2 \\
\hline
111 & 1110 & 110 & 1 \\
\hline
1010 & 1110 & 10101 & 1 \\
\hline
1011 & 1011 & 1100 & 3 \\
\hline
1100 & 1101 & 1110 & 2 \\
\hline
1101 & 11010 & 11011 & 1 \\
\hline
1110 & 11100 & 1011 & 2 \\
\hline
10101 & 1110 & 10101 & 2 \\
\hline
11010 & 1110 & 110 & 2 \\
\hline
11011 & 111 & 101 & 2 \\
\hline
11100 & 11010 & 111001 & 1 \\
\hline
111001 & 1010 & 1110011 & 2 \\
\hline
110011 & 1011 & 11100111 & 3 \\
\hline
11100111 & 11100 & 1110011 & 2 \\
\hline
\end{tabular}
\end{center}
\end{table}

\end{document}